\renewcommand\eqref[1]{(\ref{#1})} 
\numberwithin{equation}{section}
\theoremstyle{plain}
\newtheorem{thm}{Theorem}[section]
\newtheorem{cor}[thm]{Corollary}
 \newtheorem{exa}[thm]{Example}
\theoremstyle{definition}
\newtheorem{rem}[thm]{Remark}
\newcommand{\Rn}{\mathbb R^{n}}
\begin{document}

   \title[Caffarelli-Kohn-Nirenberg and Sobolev type inequalities]
   {Caffarelli-Kohn-Nirenberg and Sobolev type inequalities on stratified Lie groups}

\author[M. Ruzhansky]{Michael Ruzhansky}
\address{
  Michael Ruzhansky:
  \endgraf
  Department of Mathematics
  \endgraf
  Imperial College London
  \endgraf
  180 Queen's Gate, London SW7 2AZ
  \endgraf
  United Kingdom
  \endgraf
  {\it E-mail address} {\rm m.ruzhansky@imperial.ac.uk}
  }
\author[D. Suragan]{Durvudkhan Suragan}
\address{
  Durvudkhan Suragan:
  \endgraf
  Institute of Mathematics and Mathematical Modelling
  \endgraf
  125 Pushkin str.
  \endgraf
  050010 Almaty
  \endgraf
  Kazakhstan
  \endgraf
   and
  \endgraf
  RUDN University
  \endgraf
  6 Miklukho-Maklay str., Moscow 117198
  \endgraf
  Russia
  \endgraf
  {\it E-mail address} {\rm suragan@math.kz}
  }
\author[N. Yessirkegenov]{Nurgissa Yessirkegenov}
\address{
  Nurgissa Yessirkegenov:
  \endgraf
  Institute of Mathematics and Mathematical Modelling
  \endgraf
  125 Pushkin str.
  \endgraf
  050010 Almaty
  \endgraf
  Kazakhstan
  \endgraf
  and
  \endgraf
  Department of Mathematics
  \endgraf
  Imperial College London
  \endgraf
  180 Queen's Gate, London SW7 2AZ
  \endgraf
  United Kingdom
  \endgraf
  {\it E-mail address} {\rm n.yessirkegenov15@imperial.ac.uk}
  }

\thanks{The authors were supported in parts by the EPSRC
 grant EP/R003025/1 and by the Leverhulme Grant RPG-2017-151,
 as well as by the MESRK grant 5127/GF4. The second author was also supported by the Ministry of Science of the Russian Federation (the Agreement number No 02.a03.21.0008). The third author was also supported by the MESRK grant 0825/GF4. No new data was collected or
generated during the course of research.}

     \keywords{Sobolev type inequality, Hardy inequality, Caffarelli-Kohn-Nirenberg inequality, stratified Lie group.}
     \subjclass[2010]{22E30, 43A80}

     \begin{abstract}
    In this short paper, we establish a range of Caffarelli-Kohn-Nirenberg and weighted $L^{p}$-Sobolev type inequalities on stratified Lie groups.
    All inequalities are obtained with sharp constants.
    Moreover, the equivalence of the Sobolev type inequality and Hardy inequality is shown in the $L^{2}$-case.

     \end{abstract}
     \maketitle

\section{Introduction}
\label{SEC:intro}

Let us start by recalling the classical Caffarelli-Kohn-Nirenberg inequality \cite{CKN84}:

\begin{thm}\label{clas_CKN}
Let $n\in\mathbb{N}$ and let $p$, $q$, $r$, $a$, $b$, $d$, $\delta\in \mathbb{R}$ be such that $p,q\geq1$, $r>0$, $0\leq\delta\leq1$, and
\begin{equation}\label{clas_CKN0}
\frac{1}{p}+\frac{a}{n},\, \frac{1}{q}+\frac{b}{n},\, \frac{1}{r}+\frac{c}{n}>0,
\end{equation}
where
\begin{equation}\label{clas_CKN01}
c=\delta d + (1-\delta) b.
\end{equation}
Then there exists a positive constant $C$ such that
\begin{equation}\label{clas_CKN1}
\||x|^{c}f\|_{L^{r}(\Rn)}\leq C \||x|^{a}|\nabla f|\|^{\delta}_{L^{p}(\Rn)} \||x|^{b}f\|^{1-\delta}_{L^{q}(\Rn)}
\end{equation}
holds for all $f\in C_{0}^{\infty}(\Rn)$, if and only if the following conditions hold:
\begin{equation}\label{clas_CKN2}
\frac{1}{r}+\frac{c}{n}=\delta \left(\frac{1}{p}+\frac{a-1}{n}\right)+(1-\delta)\left(\frac{1}{q}+\frac{b}{n}\right),
\end{equation}
\begin{equation}\label{clas_CKN3}
a-d\geq 0 \quad {\rm if} \quad \delta>0,
\end{equation}
\begin{equation}\label{clas_CKN4}
a-d\leq 1 \quad {\rm if} \quad \delta>0 \quad {\rm and} \quad \frac{1}{r}+\frac{c}{n}=\frac{1}{p}+\frac{a-1}{n}.
\end{equation}
\end{thm}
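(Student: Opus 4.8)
The plan is to treat the two implications of the statement separately and, for the harder (sufficiency) direction, to reduce everything to a single model inequality with $\delta=1$.

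\emph{Necessity.} First I would use the dilation structure: replacing $f$ by $f(\lambda\,\cdot)$ with $\lambda>0$ and changing variables shows that the left-hand side of \eqref{clas_CKN1} carries a factor $\lambda^{-c-n/r}$ and the right-hand side a factor $\lambda^{\delta(1-a-n/p)+(1-\delta)(-b-n/q)}$; since the constant $C$ is independent of $\lambda$, letting $\lambda\to0$ and $\lambda\to\infty$ forces the two exponents to coincide, and dividing by $n$ gives exactly \eqref{clas_CKN2}. For \eqref{clas_CKN3} and \eqref{clas_CKN4} I would insert into \eqref{clas_CKN1} trial functions with a prescribed power-type profile: for \eqref{clas_CKN3}, functions that near the origin behave like $|x|^{-\gamma}$, smoothly truncated at a small scale $\varepsilon$ and cut off at scale $1$, and let $\varepsilon\to0$; comparing the rates at which the three (in general divergent) norms grow, and then optimising in the free exponent $\gamma$, produces precisely $a-d\ge0$ when $\delta>0$. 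Condition \eqref{clas_CKN4} comes out of the analogous computation at infinity (profile $|x|^{-\gamma}\eta(x/R)$, $R\to\infty$) in the borderline regime $\tfrac1r+\tfrac cn=\tfrac1p+\tfrac{a-1}n$, where dilation invariance no longer fixes $\gamma$.

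\emph{Sufficiency: reduction to $\delta=1$.} Assume \eqref{clas_CKN2}--\eqref{clas_CKN4}. Set $c_1:=d$ and let $r_1$ be defined by $\tfrac1{r_1}+\tfrac{c_1}n=\tfrac1p+\tfrac{a-1}n$. A short computation based on \eqref{clas_CKN01} and \eqref{clas_CKN2} yields the two identities $c=\delta c_1+(1-\delta)b$ and $\tfrac1r=\tfrac{\delta}{r_1}+\tfrac{1-\delta}q$, so H\"older's inequality gives
\[
\| |x|^{c}f\|_{L^{r}(\Rn)}\le \| |x|^{c_1}f\|_{L^{r_1}(\Rn)}^{\delta}\,\| |x|^{b}f\|_{L^{q}(\Rn)}^{1-\delta}.
\]
Moreover \eqref{clas_CKN3}--\eqref{clas_CKN4} collapse to the single chain $0\le a-c_1\le1$. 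Hence the whole theorem follows once we establish the $\delta=1$ weighted Sobolev inequality
\[
\| |x|^{c_1}f\|_{L^{r_1}(\Rn)}\le C\,\| |x|^{a}|\nabla f|\|_{L^{p}(\Rn)},\qquad \tfrac1{r_1}+\tfrac{c_1}n=\tfrac1p+\tfrac{a-1}n,\quad 0\le a-c_1\le1.
\]

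\emph{Sufficiency: the model inequality and the main obstacle.} For $f\in C_0^\infty(\Rn)$ I would start from the pointwise bound $|f(x)|\le c_n\int_{\Rn}|x-y|^{1-n}|\nabla f(y)|\,dy$, write $|\nabla f(y)|=|y|^{-a}\big(|y|^{a}|\nabla f(y)|\big)$, and apply the Stein--Weiss weighted Hardy--Littlewood--Sobolev inequality for the Riesz potential of order $1$; its hypotheses match our data exactly, the scaling condition being $\tfrac1{r_1}+\tfrac{c_1}n=\tfrac1p+\tfrac{a-1}n$, the weight-sum condition being $a-c_1\ge0$, and the integrability condition $r_1\ge p$ being $a-c_1\le1$, while the strict inequalities in \eqref{clas_CKN0} supply the remaining non-degeneracy. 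This handles the generic range $p>1$, $n\ge2$. The delicate points -- and the main obstacle -- are the boundary situations where this argument degenerates: the equality case in \eqref{clas_CKN4} (where $r_1=p$ and one may instead integrate $f$ along rays, $f(x)=-\int_1^\infty x\cdot\nabla f(tx)\,dt$, and use Minkowski's integral inequality), the endpoint $p=1$ (where the Riesz-potential estimate fails and one must resort to the coarea formula together with the weighted isoperimetric/Gagliardo--Nirenberg inequality), the endpoint $r_1=\infty$, and the one-dimensional case; in each of these one has to replace fractional integration by a direct estimate or a symmetrisation argument. A final routine check is that the exponent $r_1$ generated by the interpolation is always $\ge1$ under the hypotheses, so that all norms above are meaningful.
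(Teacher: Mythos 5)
A preliminary remark: the paper does not prove Theorem \ref{clas_CKN} at all --- it is quoted from \cite{CKN84} as classical background --- so there is no in-paper argument to compare yours with; your proposal has to stand on its own. On its own terms, the necessity half (dilations for \eqref{clas_CKN2}, truncated power-type profiles at the origin and at infinity for \eqref{clas_CKN3} and \eqref{clas_CKN4}) is the standard and viable route, and your identities $c=\delta c_1+(1-\delta)b$ and $\tfrac1r=\tfrac{\delta}{r_1}+\tfrac{1-\delta}{q}$ are correct consequences of \eqref{clas_CKN01} and \eqref{clas_CKN2}.

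The sufficiency part, however, has a genuine gap. Your assertion that \eqref{clas_CKN3}--\eqref{clas_CKN4} ``collapse to the single chain $0\le a-c_1\le 1$'' is false: \eqref{clas_CKN4} imposes $a-d\le 1$ only in the borderline case $\tfrac1r+\tfrac cn=\tfrac1p+\tfrac{a-1}n$. For $0<\delta<1$ off the borderline, parameter sets with $a-d>1$ are admissible and the theorem still claims the inequality; but then $\tfrac1{r_1}=\tfrac1p+\tfrac{a-1-d}{n}>\tfrac1p$, so $r_1<p$ and the Stein--Weiss hypotheses you invoke (which need the target exponent at least $p$) fail, i.e.\ your $\delta=1$ model inequality with weight $c_1=d$ is simply not available. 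A second failure mode of the same reduction: since \eqref{clas_CKN0} is assumed only at the three stated points, nothing prevents $\tfrac1{r_1}+\tfrac dn=\tfrac1p+\tfrac{a-1}n\le 0$ (this already happens for $p>n$ with $\tfrac1p+\tfrac an$ small, even with $0\le a-d\le1$ and all hypotheses of the theorem satisfied), in which case $r_1$ is not a Lebesgue exponent and even your H\"older step is meaningless. Repairing this requires the extra case analysis that is the real content of the original proof: when the intermediate point $(d,r_1)$ is inadmissible one must exploit the non-borderline slack and interpolate through a different pair $(d',r')$ with $\tfrac1{r'}+\tfrac{d'}n=\tfrac1p+\tfrac{a-1}n$ and $0\le a-d'\le1$ (e.g.\ $d'=a-1$, $r'=p$), with a correspondingly modified H\"older splitting, and verify the resulting exponents are legitimate. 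None of this appears in your outline, whereas the boundary issues you do flag ($p=1$, $r_1=\infty$, $n=1$, the equality case in \eqref{clas_CKN4}) are comparatively minor.
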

Since the paper \cite{CKN84} the subject of such inequalities has been actively investigated. See, for instance, \cite{CW-2001} for sharpness results, \cite{NDD12} for radially symmetric functions and \cite{ZhHD15} for the generalised Baouendi-Grushin vector field. We also refer to \cite{Han15} for the Heisenberg group results that go back to the paper of Garofalo and Lanconelli \cite{GL} (see also \cite{DAmbrosio-Difur04} and references therein), in which Hardy-type inequalities on the Heisenberg group were presented.  For a short review in this direction and some further discussions we refer to recent papers \cite{ORS16}, \cite{RS17a} and \cite{RSY17} as well as to references therein.
We can also refer to \cite{CPDT07} for discussions related to the Heisenberg group.
For a recent review of some results concerning Hardy inequality on stratified groups we can refer to \cite{CCR15}, and also to \cite{CRS07}.
Some results on Hardy inequalities on homogeneous groups have appeared in
\cite{RS-identities}
and on Caffarelli-Kohn-Nirenberg inequalities in \cite{RSY17}

Since we are also interested in Sobolev inequality, let us recall it briefly. If $1<p,p^{*}<\infty$ and
\begin{equation}
\frac{1}{p}=\frac{1}{p^{*}}-\frac{1}{n},
\end{equation}
then the (Euclidean) Sobolev inequality has the form
\begin{equation}\label{EQ:Sob}
\|g\|_{L^{p}(\mathbb{R}^{n})}\leq C(p)\|\nabla g\|_{L^{p^{*}}(\mathbb{R}^{n})},
\end{equation}
where $\nabla$ is the standard gradient in $\mathbb{R}^{n}$.

The following Sobolev type inequality with respect to the operator $x\cdot \nabla$ instead of $\nabla$ has been considered in \cite{OS09}:
\begin{equation}\label{Sob}
\|g\|_{L^{p}(\mathbb{R}^{n})}\leq C'(p)\|x\cdot\nabla g\|_{L^{q}(\mathbb{R}^{n})}.
\end{equation}
For any $\lambda>0$, by substituting $g(x)=h(\lambda x)$ into \eqref{Sob}, one easily observes that $p=q$ is a necessary condition to have \eqref{Sob}.

In this paper we are interested in these inequalities in the setting of general stratified groups (or homogeneous Carnot groups). Such groups have been historically investigated by Folland \cite{F75}, with numerous subsequent contributions by many people. This class includes the Heisenberg group as the main example, as well as more general $H$-type (see e.g. \cite{GRS17}) and other groups.

The Sobolev inequality \eqref{EQ:Sob} is well known on stratified Lie groups (\cite{F75}) and, in fact, even on general graded Lie groups, see e.g. \cite[Theorem 4.4.28]{FR16}. So, here we are more interested in the Sobolev type inequalities \eqref{Sob}.
The Cafarelli-Kohn-Nirenberg inequalities on the stratified groups have been also recently investigated in \cite{RS17a} but only in the case of $p=q=r$. Here we extend it to a more general range of $p,q$ and $r$.

For the convenience of the reader we summarise briefly the results of this paper: Let $\mathbb{G}$ be a stratified group with $N$ being the dimension of the first stratum and let $|\cdot|$ be the Euclidean norm on $\mathbb{R}^{N}$.
We denote by $x'$ the variables from the first stratum of $\mathbb G$. Then we have
\begin{itemize}
\item ({\bf Sobolev type inequality}) Let $\alpha\in\mathbb{R}$. Then for any $f\in C_{0}^{\infty}(\mathbb{G}\backslash\{x'=0\})$, and all $1<p<\infty$, we have
$$
\frac{|N-\alpha p|}{p}\left\|\frac{f}{|x'|^{\alpha}}\right\|_{L^{p}(\mathbb{G})}\leq \left\|\frac{x'\cdot \nabla_{H}f}{|x'|^{\alpha}}\right\|_{L^{p}(\mathbb{G})},
$$
where $\nabla_{H}$ is the horizontal gradient on $\mathbb G$ and
$|\cdot|$ is the Euclidean norm on $\mathbb{R}^{N}$. If $N\neq\alpha p$, then the constant $\frac{|N-\alpha p|}{p}$ is sharp.

\item ({\bf Equivalence of Hardy and Sobolev type inequalities in $L^2(\mathbb G)$})
Let $N\geq3$. Then the following two statements are equivalent:

\noindent
a) For any $f\in C_{0}^{\infty}(\mathbb{G}\backslash\{x'=0\})$, we have
$$
\|f\|_{L^{2}(\mathbb{G})}\leq \frac{2}{N}\|x'\cdot \nabla_{H}f\|_{L^{2}(\mathbb{G})}.
$$
b) For any $g\in C_{0}^{\infty}(\mathbb{G}\backslash\{x'=0\})$, we have
$$
\left\|\frac{g}{|x'|}\right\|_{L^{2}(\mathbb{G})}\leq \frac{2}{N-2}\left\|\frac{x'}{|x'|}\cdot \nabla_{H}g\right\|_{L^{2}(\mathbb{G})}.
$$

\item ({\bf Caffarelli-Kohn-Nirenberg inequalities})
Let $N\neq p(1-a)$. Let $1<p,q<\infty$, $0<r<\infty$ with $p+q\geq r$ and $\delta\in[0,1]\cap\left[\frac{r-q}{r},\frac{p}{r}\right]$ and $a$, $b$, $c\in\mathbb{R}$. Assume that $\frac{\delta r}{p}+\frac{(1-\delta)r}{q}=1$ and $c=\delta(a-1)+b(1-\delta)$. Then we have the following Caffarelli-Kohn-Nirenberg type inequality for all $f\in C_{0}^{\infty}(\mathbb{G}\backslash\{0\})$:
$$
\||x'|^{c}f\|_{L^{r}(\mathbb{G})}
\leq \left|\frac{p}{N+p(a-1)}\right|^{\delta} \left\||x'|^{a}\nabla_{H}f\right\|^{\delta}_{L^{p}(\mathbb{G})}
\left\||x'|^{b}f\right\|^{1-\delta}_{L^{q}(\mathbb{G})}.
$$
The constant $\left|\frac{p}{N+p(a-1)}\right|^{\delta}$ is sharp for $p=q$ with $a-b=1$ or $p\neq q$ with $p(1-a)+bq\neq0$, or $\delta=0,1$.
\end{itemize}

Note  that these inequalities with weights from the first stratum of $\mathbb G$ also give new insights (proofs) in the Euclidean setting. By using this idea in the paper \cite{RS17a} a new general inequality (see \cite[Proposition 3.2]{RS17a}) was obtained in the Euclidean case, in particular, whose proof gave a new (simple) proof of the Badiale-Tarantello conjecture. Obtained versions may also have applications to linear and nonlinear equations of mathematical physics (see, e.g. \cite{BadTar:ARMA-2002}). In addition, to the best of our knowledge, the Caffarelli-Kohn-Nirenberg inequalities above on a (general) stratified group $\mathbb G$ are new even in the Abelian case, that is, in the Euclidean case these extend the classical Caffarelli-Kohn-Nirenberg inequalities with respect to ranges of parameters, see Example \ref{CKN_example}. Hardy inequalities for different operators is a topic with many investigations. For example, for sub-Laplacians with lower regularity, see \cite{Kogoj-Sonner:Hardy-lambda-CVEE-2016}.

In Section \ref{SEC:prelim} we briefly recall the main concepts of stratified groups and fix the notation. In Section \ref{SEC:Hardy-Sobolev} the $L^{p}$-weighted Sobolev type inequality and its equivalence to the Hardy inequality in $L^{2}$ are proved. Finally, in Section \ref{SEC:CKN} we obtain the Caffarelli-Kohn-Nirenberg type inequalities on stratified Lie groups.

\section{Preliminaries}
\label{SEC:prelim}

In this section we very briefly recall the necessary notation concerning the setting of stratified groups.

A Lie group $\mathbb{G}=(\mathbb{R}^{n}, \circ)$ is called a stratified group (or a homogeneous Carnot group) if it satisfies the following two conditions:

\begin{itemize}
\item For some natural numbers $N+N_{2}+\ldots+N_{r}=n$, that is $N=N_{1}$, the decomposition $\mathbb{R}^{n}=\mathbb{R}^{N}\times\ldots\times\mathbb{R}^{N_{r}}$ is valid, and for every $\lambda>0$ the dilation $\delta_{\lambda}:\mathbb{R}^{n}\rightarrow\mathbb{R}^{n}$ given by
    $$\delta_{\lambda}(x)=\delta_{\lambda}(x',x^{(2)},\ldots,x^{(r)}):=
    (\lambda x', \lambda^{2}x^{(2)},\ldots,\lambda^{r}x^{(r)})$$ is an automorphism of the group $\mathbb{G}$. Here $x'\equiv x^{(1)}\in\mathbb{R}^{N}$ and $x^{(k)}\in\mathbb{R}^{N_{k}}$ for $k=2,\ldots,r$.
\item Let $N$ be as in above and let $X_{1}, \ldots, X_{N}$ be the left invariant vector fields on $\mathbb{G}$ such that $X_{k}(0)=\frac{\partial}{\partial x_{k}}|_{0}$ for $k=1, \ldots, N$. Then
    $${\rm rank}({\rm Lie}\{X_{1}, \ldots, X_{N}\})=n,$$
for every $x\in\mathbb{R}^{n}$, i.e. the iterated commutators of $X_{1}, \ldots, X_{N}$ span the Lie algebra of $\mathbb{G}$.
\end{itemize}
That is, we say that the triple $\mathbb{G}=(\mathbb{R}^{n}, \circ, \delta_{\lambda})$ is a stratified group. Such groups have been thoroughly investigated by Folland \cite{F75}. A more general approach without identifying them with $\mathbb R^n$ is possible but then it can be shown to reduce to the definition above, so we may work with it from the beginning.
We refer to e.g. \cite{FR16} for more detailed discussions from the Lie algebra point of view.

Here the left invariant vector fields $X_{1}, \ldots, X_{N}$ are called the (Jacobian) generators of $\mathbb{G}$ and $r$ is called a step of $\mathbb{G}$. The number
$$Q=\sum_{k=1}^{r}kN_{k}, \;\; N_{1}=N,$$
is called the homogeneous dimension of $\mathbb{G}$. We also recall that the standard Lebesgue measure $dx$ on $\mathbb{R}^{n}$ is the Haar measure for $\mathbb{G}$ (see, e.g. \cite[Proposition 1.6.6]{FR16}).
For more details on stratified groups we refer to \cite{BLU07} or \cite{FR16}.

 The left invariant vector fields $X_{j}$ have an explicit form and satisfy the divergence theorem, see e.g. \cite{RS17b} for the derivation of the exact formula: more precisely, we can write
\begin{equation}\label{Xk}
X_{k}=\frac{\partial}{\partial x'_{k}}+\sum_{\ell=2}^{r}\sum_{m=1}^{N_{1}}a_{k,m}^{(\ell)}
(x', \ldots, x^{\ell-1})\frac{\partial}{\partial x_{m}^{(\ell)}},
\end{equation}
see also \cite[Section 3.1.5]{FR16} for a general presentation. Throughout this paper, we will also use the following notations:
$$\nabla_{H}:=(X_{1}, \ldots, X_{N})$$
for the horizontal gradient,
$${\rm div}_{H}\upsilon:=\nabla_{H}\cdot \upsilon$$
for the horizontal divergence, and
$$|x'|=\sqrt{x_{1}^{'2}+\ldots+x_{N}^{'2}}$$
for the Euclidean norm on $\mathbb{R}^{N}$.

The explicit representation of the left invariant vector fields $X_{j}$ \eqref{Xk} allows us to verify the identities
\begin{equation}\label{formula1}
|\nabla_{H}|x'|^{\gamma}|=\gamma|x'|^{\gamma-1},
\end{equation}
and
\begin{equation}\label{formula2}
{\rm div}_{H}\left(\frac{x'}{|x'|^{\gamma}}\right)=\frac{\sum_{j=1}^{N}|x'|^{\gamma}X_{j}x'_{j}-
\sum_{j=1}^{N}x'_{j}\gamma|x'|^{\gamma-1}X_{j}|x'|}{|x'|^{2\gamma}}=\frac{N-\gamma}{|x'|^{\gamma}}
\end{equation}
for any $\gamma \in \mathbb{R}$, $|x'|\neq0$.

\section{Hardy and Sobolev type inequalities on stratified Lie groups}
\label{SEC:Hardy-Sobolev}

In this section we investigate $L^{p}$-weighted Sobolev type inequality and show its equivalence to the Hardy inequality in $L^{2}$.

\begin{thm}\label{Sobolev}
Let $\mathbb{G}$ be a stratified group with $N$ being the dimension of the first stratum, and let $\alpha\in\mathbb{R}$. Then for any $f\in C_{0}^{\infty}(\mathbb{G}\backslash\{x'=0\})$, and all $1<p<\infty$, we have
\begin{equation}\label{Sobolev1}
\frac{|N-\alpha p|}{p}\left\|\frac{f}{|x'|^{\alpha}}\right\|_{L^{p}(\mathbb{G})}\leq \left\|\frac{x'\cdot \nabla_{H}f}{|x'|^{\alpha}}\right\|_{L^{p}(\mathbb{G})},
\end{equation}
where $|\cdot|$ is the Euclidean norm on $\mathbb{R}^{N}$. If $N\neq\alpha p$, then the constant $\frac{|N-\alpha p|}{p}$ is sharp.
\end{thm}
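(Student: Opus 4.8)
The plan is to deduce \eqref{Sobolev1} from the divergence identity \eqref{formula2} by an integration by parts followed by Hölder's inequality, and then to obtain the sharpness of $\tfrac{|N-\alpha p|}{p}$ by testing on functions that are a power of $|x'|$ times a cut-off in the higher-stratum variables, reducing matters to the classical one-dimensional weighted Hardy inequality. The cut-off will have to be dilated compatibly with the group structure, and this is where the one genuinely delicate point lies.

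\emph{Proof of \eqref{Sobolev1}.} If $N=\alpha p$ the inequality is trivial, so assume $N\neq\alpha p$. Taking $\gamma=\alpha p$ in \eqref{formula2} we may write $|x'|^{-\alpha p}=(N-\alpha p)^{-1}\,{\rm div}_{H}\big(x'|x'|^{-\alpha p}\big)$ on $\{x'\neq0\}$. Multiplying by $|f|^{p}$ (which is $C^{1}$ with compact support in $\mathbb{G}\backslash\{x'=0\}$ for $p>1$), integrating over $\mathbb{G}$ and using the divergence theorem for $X_{1},\dots,X_{N}$ recalled in Section~\ref{SEC:prelim}, we get
\[
(N-\alpha p)\int_{\mathbb{G}}\frac{|f|^{p}}{|x'|^{\alpha p}}\,dx=-\int_{\mathbb{G}}\frac{x'\cdot\nabla_{H}(|f|^{p})}{|x'|^{\alpha p}}\,dx .
\]
Since $|x'\cdot\nabla_{H}(|f|^{p})|\le p\,|f|^{p-1}\,|x'\cdot\nabla_{H}f|$, writing $|x'|^{-\alpha p}=|x'|^{-\alpha(p-1)}|x'|^{-\alpha}$ and applying Hölder's inequality with exponents $p/(p-1)$ and $p$ yields
\[
\int_{\mathbb{G}}\frac{|f|^{p}}{|x'|^{\alpha p}}\,dx\le\frac{p}{|N-\alpha p|}\Big(\int_{\mathbb{G}}\frac{|f|^{p}}{|x'|^{\alpha p}}\,dx\Big)^{1-1/p}\Big(\int_{\mathbb{G}}\frac{|x'\cdot\nabla_{H}f|^{p}}{|x'|^{\alpha p}}\,dx\Big)^{1/p}.
\]
As the first integral is finite for $f\in C_{0}^{\infty}(\mathbb{G}\backslash\{x'=0\})$, dividing by its $(1-1/p)$-th power gives \eqref{Sobolev1}.

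\emph{Sharpness.} Still assume $N\neq\alpha p$, and test \eqref{Sobolev1} on $f(x)=u(|x'|)\,\psi(x^{(2)},\dots,x^{(r)})$ with $u\in C_{0}^{\infty}((0,\infty))$ and $\psi\in C_{0}^{\infty}(\mathbb{R}^{n-N})$. From \eqref{Xk},
\[
x'\cdot\nabla_{H}f=|x'|\,u'(|x'|)\,\psi+u(|x'|)\,\mathcal{E}_{\psi},\qquad \mathcal{E}_{\psi}:=\sum_{k=1}^{N}x'_{k}\sum_{\ell=2}^{r}\sum_{m=1}^{N}a_{k,m}^{(\ell)}\,\frac{\partial\psi}{\partial x_{m}^{(\ell)}},
\]
so $\mathcal{E}_{\psi}$ involves only the lower-order part of $\nabla_{H}$. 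By Fubini, $\|f\,|x'|^{-\alpha}\|_{L^{p}(\mathbb{G})}^{p}$ and $\||x'|u'(|x'|)\psi\,|x'|^{-\alpha}\|_{L^{p}(\mathbb{G})}^{p}$ equal $\|\psi\|_{L^{p}}^{p}$ times fixed positive multiples of $\int_{0}^{\infty}|u|^{p}\rho^{N-1-\alpha p}\,d\rho$ and $\int_{0}^{\infty}|u'|^{p}\rho^{N-1+p-\alpha p}\,d\rho$; hence, modulo $\mathcal{E}_{\psi}$, \eqref{Sobolev1} reduces to the one-dimensional weighted Hardy inequality with weight exponent $N-1-\alpha p$, whose best constant is exactly $\tfrac{|N-\alpha p|}{p}$ and is approached by mollified truncations of $\rho\mapsto\rho^{-(N-\alpha p)/p}$ on $[1,L]$ as $L\to\infty$. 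It remains to make $\mathcal{E}_{\psi}$ negligible: fix such a near-extremal $u$, so that $|x'|\in[a,b]$ with $0<a<b<\infty$ on ${\rm supp}\,f$, and take $\psi=\psi_{R}$ with $\psi_{R}(x^{(2)},\dots,x^{(r)})=\psi_{1}(R^{-2}x^{(2)},\dots,R^{-r}x^{(r)})$. Each $a_{k,m}^{(\ell)}$ is $\delta_{\lambda}$-homogeneous of degree $\ell-1$ (from $X_{k}$ being homogeneous of degree $1$ and $x_{m}^{(\ell)}\circ\delta_{\lambda}=\lambda^{\ell}x_{m}^{(\ell)}$), so on ${\rm supp}\,\psi_{R}$ one has $|a_{k,m}^{(\ell)}|\lesssim R^{\ell-1}$ while $|\partial\psi_{R}/\partial x_{m}^{(\ell)}|\lesssim R^{-\ell}$, giving $|u(|x'|)\mathcal{E}_{\psi_{R}}|\lesssim R^{-1}$ there; since $\|\psi_{R}\|_{L^{p}}^{p}\asymp R^{Q-N}$ while ${\rm supp}\,\nabla_{H}\psi_{R}$ has $(x^{(2)},\dots,x^{(r)})$-measure $\lesssim R^{Q-N}$, the relative error $\|u(|x'|)\mathcal{E}_{\psi_{R}}\,|x'|^{-\alpha}\|_{L^{p}}/\|f\,|x'|^{-\alpha}\|_{L^{p}}\lesssim R^{-1}\to0$ as $R\to\infty$. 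Thus the quotient $\|f\,|x'|^{-\alpha}\|_{L^{p}}/\|x'\cdot\nabla_{H}f\,|x'|^{-\alpha}\|_{L^{p}}$ converges as $R\to\infty$ to the one-dimensional quotient for $u$, which tends to $\tfrac{p}{|N-\alpha p|}$ as $L\to\infty$; together with \eqref{Sobolev1}, this proves that $\tfrac{|N-\alpha p|}{p}$ is sharp.

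\emph{Main obstacle.} The genuinely delicate step is controlling $\mathcal{E}_{\psi}$. Because $X_{k}$ differs from $\partial/\partial x'_{k}$ by terms whose polynomial coefficients $a_{k,m}^{(\ell)}$ blow up like $R^{\ell-1}$ under $\delta_{R}$, an arbitrary compactly supported cut-off in the higher strata would destroy the estimate; the cut-off $\psi_{R}$ must be dilated block by block with the homogeneity weights, so that this growth is beaten by the $R^{-\ell}$ decay of $\partial\psi_{R}/\partial x_{m}^{(\ell)}$. The remaining ingredients — the divergence identity \eqref{formula2}, Hölder's inequality, and the sharpness of the one-dimensional Hardy inequality — are classical.
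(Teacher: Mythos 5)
Your proof of the inequality itself is correct and coincides with the paper's: both use \eqref{formula2} with $\gamma=\alpha p$, the divergence theorem applied to $|f|^{p}$, the pointwise bound $|x'\cdot\nabla_{H}(|f|^{p})|\le p|f|^{p-1}|x'\cdot\nabla_{H}f|$, and H\"older's inequality, followed by division by the finite factor $\bigl(\int_{\mathbb{G}}|f|^{p}|x'|^{-\alpha p}dx\bigr)^{1-1/p}$. Where you genuinely diverge is the sharpness. The paper argues very briefly: it exhibits $h_{1}(x)=|x'|^{-|N-\alpha p|/p}$, checks that it satisfies the pointwise equality condition in H\"older's inequality, and asserts sharpness ``if we approximate this function by smooth compactly supported functions,'' leaving the approximation (which is needed, since $h_1$ itself is not admissible --- the relevant integrand is exactly $\rho^{-1}$ in the radial variable) implicit. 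You instead build an explicit quasi-extremizing family: $f=u(|x'|)\psi_{R}(x^{(2)},\dots,x^{(r)})$ with $u$ a mollified truncation of the same power $\rho^{-(N-\alpha p)/p}$ on $[1,L]$, and a cut-off in the higher strata dilated anisotropically according to the group homogeneity. Your key technical points are sound: by \eqref{Xk} and homogeneity of $X_{k}$, each coefficient $a^{(\ell)}_{k,m}$ is $\delta_{\lambda}$-homogeneous of degree $\ell-1$, so on the support of $\psi_{R}$ it grows at most like $R^{\ell-1}$, which is beaten by the $R^{-\ell}$ decay of $\partial\psi_{R}/\partial x^{(\ell)}_{m}$, giving a relative error $O(R^{-1})$ against the main terms of size $R^{(Q-N)/p}$; the quotient then converges to the one-dimensional weighted Hardy quotient with weight $\rho^{N-1-\alpha p}$, whose best constant is $|N-\alpha p|/p$, and the order of limits ($R\to\infty$ for fixed $u$, then $L\to\infty$) is handled correctly. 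So your route buys a complete, self-contained verification of sharpness on the group (making rigorous precisely the approximation step the paper glosses over), at the cost of more bookkeeping; the paper's route is shorter but only heuristic at that point.
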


\begin{rem} In the abelian case $\mathbb{G}=(\mathbb{R}^{n},+)$, we have $N=n$, $\nabla_{H}=\nabla=(\partial_{x_{1}}, \ldots, \partial_{x_{n}})$, so \eqref{Sobolev1} implies the $L^{p}$-weighted Sobolev type inequality (see \cite{OS09}) for $\mathbb{G}=\mathbb{R}^{n}$ with the sharp constant:
\begin{equation}\label{Sobolev_euclid}
\frac{|n-\alpha p|}{p}\left\|\frac{f}{|x|^{\alpha}}\right\|_{L^{p}(\mathbb{R}^{n})}\leq \left\|\frac{x\cdot \nabla f}{|x|^{\alpha}}\right\|_{L^{p}(\mathbb{R}^{n})},
\end{equation}
for all $f\in C_{0}^{\infty}(\mathbb{R}^{n}\backslash\{0\})$, and $|x|=\sqrt{x_{1}^{2}+\ldots+x_{n}^{2}}$.
\end{rem}

The proof is a simple argument, giving the result also related to \cite[Theorem 2.12]{DAmbrosio-Difur04}.

\begin{proof}[Proof of Theorem \ref{Sobolev}]
We may assume that $\alpha p\neq N$ since for $\alpha p=N$ the inequality \eqref{Sobolev1} is trivial. By using the identity \eqref{formula2} and the divergence theorem one calculates
$$\int_{\mathbb{G}}\frac{|f(x)|^{p}}{|x'|^{\alpha p}}=\frac{1}{N-\alpha p}\int_{\mathbb{G}}|f(x)|^{p}{\rm div}_{H}\left(\frac{x'}{|x'|^{\alpha p}}\right)dx$$
$$=-\frac{p}{N-\alpha p}{\rm Re} \int_{\mathbb{G}}pf(x)|f(x)|^{p-2}\frac{\overline{x'\cdot\nabla_{H}f}}{|x'|^{\alpha p}}dx$$
$$\leq\left|\frac{p}{N-\alpha p}\right|\int_{\mathbb{G}}\frac{|f(x)|^{p-1}}{|x'|^{\alpha p}}|x'\cdot\nabla_{H}f|dx$$
$$\leq\left|\frac{p}{N-\alpha p}\right|\int_{\mathbb{G}}\frac{|f(x)|^{p-1}}{|x'|^{\alpha(p-1)}}\frac{|x'\cdot\nabla_{H}f|}{|x'|^{\alpha}}dx$$
$$\leq\left|\frac{p}{N-\alpha p}\right|\left(\frac{|f(x)|^{p}}{|x'|^{\alpha p}}dx\right)^{\frac{p-1}{p}}
\left(\frac{|x'\cdot\nabla_{H}f|^{p}}{|x'|^{\alpha p}}dx\right)^{\frac{1}{p}},$$
which implies \eqref{Sobolev1}. Here we have used H\"older's inequality in the last line. Now let us prove the sharpness of the constant. We note that the function
$$h_{1}(x)=\frac{1}{|x'|^{\frac{|N-\alpha p|}{p}}}, \quad N\neq\alpha p,$$
satisfies the following equality condition in H\"{o}lder's inequality
$$\left|\frac{p}{N-\alpha p}\right|^{p}\frac{|x'\cdot\nabla_{H}h_{1}(x)|^{p}}{|x'|^{\alpha p}}=\frac{|h_{1}(x)|^{p}}{|x'|^{\alpha p}}.$$ Thus we have showed that the constant $\frac{|N-\alpha p|}{p}$ is sharp if we approximate this function by smooth compactly supported functions.
\end{proof}
Using Schwarz's inequality in the right hand side of \eqref{Sobolev1} we see that \eqref{Sobolev1} is a refinement of the $L^{p}$-weighted Hardy inequality on stratified groups from \cite{RS17a}:

\begin{cor}\label{cor}
Let $\mathbb{G}$ be a stratified group with $N$ being the dimension of the first stratum, and let $\alpha\in\mathbb{R}$. Then for any $f\in C_{0}^{\infty}(\mathbb{G}\backslash\{x'=0\})$, and all $1<p<\infty$, we have
\begin{equation}\label{Hardy}
\frac{|N-\alpha p|}{p}\left\|\frac{f}{|x'|^{\alpha}}\right\|_{L^{p}(\mathbb{G})}\leq \left\|\frac{\nabla_{H}f}{|x'|^{\alpha-1}}\right\|_{L^{p}(\mathbb{G})},
\end{equation}
where $|\cdot|$ is the Euclidean norm on $\mathbb{R}^{N}$. If $N\neq\alpha p$ then the constant $\frac{|N-\alpha p|}{p}$ is sharp.
\end{cor}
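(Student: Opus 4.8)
The plan is to obtain \eqref{Hardy} as an immediate consequence of Theorem \ref{Sobolev}, by dominating the horizontal directional derivative $x'\cdot\nabla_{H}f$ pointwise. For every $x$ with $x'\neq0$, the Cauchy--Schwarz (Schwarz) inequality applied to the vectors $x'$ and $\nabla_{H}f(x)=(X_{1}f(x),\ldots,X_{N}f(x))$ gives
\[
|x'\cdot\nabla_{H}f(x)|\leq|x'|\,|\nabla_{H}f(x)|,
\]
so that $|x'|^{-\alpha}|x'\cdot\nabla_{H}f(x)|\leq|x'|^{-(\alpha-1)}|\nabla_{H}f(x)|$. Taking $L^{p}(\mathbb{G})$-norms of both sides and chaining with \eqref{Sobolev1} yields
\[
\frac{|N-\alpha p|}{p}\left\|\frac{f}{|x'|^{\alpha}}\right\|_{L^{p}(\mathbb{G})}\leq\left\|\frac{x'\cdot\nabla_{H}f}{|x'|^{\alpha}}\right\|_{L^{p}(\mathbb{G})}\leq\left\|\frac{\nabla_{H}f}{|x'|^{\alpha-1}}\right\|_{L^{p}(\mathbb{G})},
\]
which is \eqref{Hardy}.

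For the sharpness of $\frac{|N-\alpha p|}{p}$ in the case $N\neq\alpha p$, I would test with the same function $h_{1}(x)=|x'|^{-|N-\alpha p|/p}$ already used in the proof of Theorem \ref{Sobolev}. From the explicit form \eqref{Xk} of the generators one has $X_{j}|x'|=x'_{j}/|x'|$, hence $\nabla_{H}|x'|^{\gamma}=\gamma|x'|^{\gamma-2}x'$ for any $\gamma\in\mathbb{R}$; in particular $\nabla_{H}h_{1}$ is pointwise a scalar multiple of $x'$, so the Cauchy--Schwarz step above is an equality for $h_{1}$. Since $h_{1}$ also realises equality in the H\"older step in the proof of Theorem \ref{Sobolev} (as $|x'\cdot\nabla_{H}h_{1}|$ is a constant multiple of $|h_{1}|$), the whole chain collapses to an equality for $h_{1}$, and the conclusion follows by approximating $h_{1}$ by functions in $C_{0}^{\infty}(\mathbb{G}\backslash\{x'=0\})$, exactly as at the end of the proof of Theorem \ref{Sobolev}.

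The argument is essentially a one-line deduction from Theorem \ref{Sobolev}; the only point needing a little care is the sharpness, where one must observe that the Sobolev-type extremiser $h_{1}$ is simultaneously an extremiser for \eqref{Hardy} --- precisely because $\nabla_{H}h_{1}$ is parallel to $x'$ --- and that the standard truncation/exhaustion near $\{x'=0\}$ and near infinity does not degrade the constant. Given the explicit form \eqref{Xk} of the vector fields and the homogeneity of $h_{1}$, both are routine, so I expect the whole corollary to take only a few lines.
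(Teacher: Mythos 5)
Your proposal is correct and follows essentially the same route as the paper, which obtains \eqref{Hardy} from \eqref{Sobolev1} precisely by applying Schwarz's inequality $|x'\cdot\nabla_{H}f|\leq|x'|\,|\nabla_{H}f|$ on the right-hand side. Your sharpness argument via $h_{1}$, exploiting that $\nabla_{H}h_{1}$ is parallel to $x'$, is the natural elaboration of the paper's reasoning (which appeals to the sharpness already established in Theorem \ref{Sobolev} and in \cite{RS17a}), so nothing is missing.
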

Thus, \eqref{Sobolev1} can be regarded as a refinement of \eqref{Hardy}. The above also gives a simple proof of a part of \cite[Theorem 2.12]{DAmbrosio-Difur04}.

Now let us show the equivalence of the Sobolev type inequality and Hardy inequality on stratified groups in $L^{2}$ case:

\begin{thm}\label{Sobolev_Hardy}
Let $\mathbb{G}$ be a  stratified group with $N$ being the dimension of the first stratum with $N\geq3$. Then the following two statements are equivalent:

\begin{itemize}
\item[(a)] For any $f\in C_{0}^{\infty}(\mathbb{G}\backslash\{x'=0\})$, we have
\begin{equation}\label{Hardy_equiv}
\|f\|_{L^{2}(\mathbb{G})}\leq \frac{2}{N}\|x'\cdot \nabla_{H}f\|_{L^{2}(\mathbb{G})}.
\end{equation}
\item[(b)] For any $g\in C_{0}^{\infty}(\mathbb{G}\backslash\{x'=0\})$, we have
\begin{equation}\label{Sobolev_equiv}
\left\|\frac{g}{|x'|}\right\|_{L^{2}(\mathbb{G})}\leq \frac{2}{N-2}\left\|\frac{x'}{|x'|}\cdot \nabla_{H}g\right\|_{L^{2}(\mathbb{G})}.
\end{equation}

\end{itemize}
\end{thm}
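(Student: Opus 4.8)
The plan is to prove each of the two implications from the other by a single explicit substitution, after which the right-hand side is rewritten by squaring the $L^{2}$-norm, expanding, and removing the cross term by integration by parts; the relevant identity is \eqref{formula2}. Throughout I will use that $X_{j}|x'|=x'_{j}/|x'|$ on $\mathbb{G}\backslash\{x'=0\}$, whence $x'\cdot\nabla_{H}|x'|=|x'|$ and $x'\cdot\nabla_{H}|x'|^{-1}=-|x'|^{-1}$. Note also that the maps $f\mapsto|x'|f$ and $g\mapsto g/|x'|$ are mutually inverse bijections of $C_{0}^{\infty}(\mathbb{G}\backslash\{x'=0\})$, so no density or approximation step is needed.

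\emph{Proof of (a) $\Rightarrow$ (b).} Given $g\in C_{0}^{\infty}(\mathbb{G}\backslash\{x'=0\})$, apply \eqref{Hardy_equiv} to $f=g/|x'|$. The product rule gives $x'\cdot\nabla_{H}f=|x'|^{-1}\big(x'\cdot\nabla_{H}g-g\big)$, so \eqref{Hardy_equiv} reads
$$\left\|\frac{g}{|x'|}\right\|_{L^{2}(\mathbb{G})}^{2}\leq\frac{4}{N^{2}}\left\|\frac{x'}{|x'|}\cdot\nabla_{H}g-\frac{g}{|x'|}\right\|_{L^{2}(\mathbb{G})}^{2}.$$
Expanding the square and using ${\rm Re}\big[\overline{g}\,(x'\cdot\nabla_{H}g)\big]=\tfrac12\,x'\cdot\nabla_{H}|g|^{2}$ together with the divergence theorem and \eqref{formula2} at $\gamma=2$, the cross term evaluates to $(N-2)\|g/|x'|\|_{L^{2}(\mathbb{G})}^{2}$, so the right-hand square equals $\|\tfrac{x'}{|x'|}\cdot\nabla_{H}g\|_{L^{2}(\mathbb{G})}^{2}+(N-1)\|g/|x'|\|_{L^{2}(\mathbb{G})}^{2}$. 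Moving the $\|g/|x'|\|_{L^{2}(\mathbb{G})}^{2}$-contributions to the left and using the algebraic identity $1-\tfrac{4(N-1)}{N^{2}}=\tfrac{(N-2)^{2}}{N^{2}}$ produces exactly \eqref{Sobolev_equiv}.

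\emph{Proof of (b) $\Rightarrow$ (a).} Given $f\in C_{0}^{\infty}(\mathbb{G}\backslash\{x'=0\})$, apply \eqref{Sobolev_equiv} to $g=|x'|f$. Then $g/|x'|=f$ and $\tfrac{x'}{|x'|}\cdot\nabla_{H}g=f+x'\cdot\nabla_{H}f$, so \eqref{Sobolev_equiv} becomes $\|f\|_{L^{2}(\mathbb{G})}^{2}\leq\tfrac{4}{(N-2)^{2}}\|f+x'\cdot\nabla_{H}f\|_{L^{2}(\mathbb{G})}^{2}$. The same expansion, now invoking \eqref{formula2} at $\gamma=0$ (so that ${\rm div}_{H}x'=N$), turns the cross term into $-N\|f\|_{L^{2}(\mathbb{G})}^{2}$, hence the right-hand square is $(1-N)\|f\|_{L^{2}(\mathbb{G})}^{2}+\|x'\cdot\nabla_{H}f\|_{L^{2}(\mathbb{G})}^{2}$. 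Rearranging and using $1+\tfrac{4(N-1)}{(N-2)^{2}}=\tfrac{N^{2}}{(N-2)^{2}}$ yields \eqref{Hardy_equiv}.

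\emph{On the main difficulty.} There is no genuine analytic obstacle here; the argument is essentially bookkeeping around the \enquote{ground-state substitution} $g=|x'|f$. The three points that need mild care are: that the substitutions preserve the admissible class (handled above, since $|x'|^{\pm1}$ is smooth and non-vanishing on the relevant support); that the real parts are tracked correctly so the Leibniz identity for $x'\cdot\nabla_{H}|h|^{2}$ is applied as stated; and that the integration by parts contributes no boundary terms, which is automatic because every function involved has compact support inside $\mathbb{G}\backslash\{x'=0\}$, where $x'/|x'|^{\gamma}$ is smooth. The real content of the equivalence is precisely that the two algebraic identities above make the sharp constants $\tfrac{2}{N}$ and $\tfrac{2}{N-2}$ match up, and this is also where the hypothesis $N\geq3$ enters, ensuring $(N-2)^{2}>0$ and that the constant $\tfrac{2}{N-2}$ in (b) is finite and positive.
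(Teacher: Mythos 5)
Your proposal is correct and follows essentially the same route as the paper's own proof: the substitution $g=|x'|f$ (equivalently $f=g/|x'|$), expansion of the squared $L^{2}$-norm, evaluation of the cross term by the divergence theorem via \eqref{formula2} (with $\gamma=2$ and $\gamma=0$), and the same algebraic rearrangements $1-\tfrac{4(N-1)}{N^{2}}=\tfrac{(N-2)^{2}}{N^{2}}$ and $1+\tfrac{4(N-1)}{(N-2)^{2}}=\tfrac{N^{2}}{(N-2)^{2}}$. The only additions beyond the paper's argument are your explicit remarks that the substitutions preserve $C_{0}^{\infty}(\mathbb{G}\backslash\{x'=0\})$ and that no boundary terms arise, which are harmless and correct.
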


\begin{proof} Let us first show that the statement (a) gives (b). We put $g=|x'|f$. Then one has
$$\|x'\cdot \nabla_{H}f\|_{L^{2}(\mathbb{G})}^{2}=\left\|-\frac{g}{|x'|}+\frac{x'}{|x'|}\cdot\nabla_{H}g\right\|_{L^{2}(\mathbb{G})}^{2}
$$
\begin{equation}\label{equiv1}
=\left\|\frac{g}{|x'|}\right\|_{L^{2}(\mathbb{G})}^{2}-2{\rm Re}\int_{\mathbb{G}}\overline{\frac{g(x)}{|x'|}}\frac{x'}{|x'|}\cdot \nabla_{H}g(x)dx+\left\|\frac{x'}{|x'|}\cdot\nabla_{H}g\right\|_{L^{2}(\mathbb{G})}^{2}.
\end{equation}
By \eqref{formula2}, one calculates
$$-2{\rm Re}\int_{\mathbb{G}}\overline{\frac{g(x)}{|x'|}}\frac{x'}{|x'|}\cdot \nabla_{H}g(x)dx
=-\int_{\mathbb{G}}\frac{x'}{|x'|^{2}}\nabla_{H}|g(x)|^{2}dx$$
$$=\int_{\mathbb{G}}{\rm div}_{H}\left(\frac{x'}{|x'|^{2}}\right)|g(x)|^{2}dx
=(N-2)\int_{\mathbb{G}}\frac{|g(x)|^{2}}{|x'|^{2}}dx.$$
We see from the statement (a) and \eqref{equiv1} that
$$\left\|\frac{g}{|x'|}\right\|_{L^{2}(\mathbb{G})}^{2}\leq \frac{4}{N^{2}}\left((N-1)\left\|\frac{g}{|x'|}\right\|_{L^{2}(\mathbb{G})}^{2}+\left\|\frac{x'}{|x'|}\cdot\nabla_{H}g\right\|_{L^{2}(\mathbb{G})}^{2}\right),$$
which implies \eqref{Sobolev_equiv}.

Conversely, assume that (b) holds. Put $f=g/|x'|$. Then we obtain
$$\left\|\frac{x'}{|x'|}\cdot\nabla_{H}(|x'|f)\right\|_{L^{2}(\mathbb{G})}^{2}=\|f+x'\cdot\nabla_{H}f\|_{L^{2}(\mathbb{G})}^{2}$$
$$=\|f\|_{L^{2}(\mathbb{G})}^{2}+2{\rm Re}\int_{\mathbb{G}}x'f(x)\overline{\nabla_{H}f}dx+\|x'\cdot\nabla_{H}f\|_{L^{2}(\mathbb{G})}^{2}.$$
Using \eqref{formula2}, we have
$$2{\rm Re}\int_{\mathbb{G}}x'f(x)\overline{\nabla_{H}f}dx=-N\|f\|_{L^{2}(\mathbb{G})}^{2}.$$
It follows from the statement (b) that
$$\|f\|_{L^{2}(\mathbb{G})}^{2}\leq \frac{4}{(N-2)^{2}}(\|x'\cdot\nabla_{H}f\|_{L^{2}(\mathbb{G})}^{2}-(N-1)\|f\|_{L^{2}(\mathbb{G})}^{2}),$$
which implies \eqref{Hardy_equiv}.
\end{proof}

\section{Caffarelli-Kohn-Nirenberg inequalities}\label{SEC:CKN}

In this section, we introduce new Caffarelli-Kohn-Nirenberg type inequalities in the setting of stratified groups. The proof is quite simple relying on weighted Hardy inequalities and the H\"older inequality. However, we note that already in
the Euclidean setting of $\Rn$ it also gives an extension of Theorem \ref{clas_CKN} from the point of view of indices.

\begin{thm}\label{CKN_thm}
Let $\mathbb{G}$ be a stratified group with $N$ being the dimension of the first stratum with $N\neq p(1-a)$. Let $1<p,q<\infty$, $0<r<\infty$ with $p+q\geq r$ and $\delta\in[0,1]\cap\left[\frac{r-q}{r},\frac{p}{r}\right]$ and $a$, $b$, $c\in\mathbb{R}$. Assume that
$$\frac{\delta r}{p}+\frac{(1-\delta)r}{q}=1 \; \textrm{ and } \; c=\delta(a-1)+b(1-\delta).$$
Then we have the following Caffarelli-Kohn-Nirenberg type inequality for all $f\in C_{0}^{\infty}(\mathbb{G}\backslash\{0\})$:
\begin{equation}\label{CKN_thm1}
\||x'|^{c}f\|_{L^{r}(\mathbb{G})}
\leq \left|\frac{p}{N+p(a-1)}\right|^{\delta} \left\||x'|^{a}\nabla_{H}f\right\|^{\delta}_{L^{p}(\mathbb{G})}
\left\||x'|^{b}f\right\|^{1-\delta}_{L^{q}(\mathbb{G})}.
\end{equation}
The constant in the inequality \eqref{CKN_thm1} is sharp for $p=q$ with $a-b=1$ or $p\neq q$ with $p(1-a)+bq\neq0$, or for $\delta=0,1$.
\end{thm}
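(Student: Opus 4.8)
The plan is to reduce the general Caffarelli--Kohn--Nirenberg inequality \eqref{CKN_thm1} to the weighted Hardy inequality of Corollary~\ref{cor} by interpolating between the Hardy estimate and a trivial $L^q$ bound via H\"older's inequality. First I would apply Corollary~\ref{cor} with the weight exponent chosen so that the Hardy inequality reads
$$
\frac{|N+p(a-1)|}{p}\left\||x'|^{a-1}f\right\|_{L^{p}(\mathbb{G})}\leq \left\||x'|^{a}\nabla_{H}f\right\|_{L^{p}(\mathbb{G})},
$$
i.e. taking $\alpha=1-a$ in \eqref{Hardy} (this is where the hypothesis $N\neq p(1-a)$ enters). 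This gives
$$
\left\||x'|^{a-1}f\right\|_{L^{p}(\mathbb{G})}\leq \left|\frac{p}{N+p(a-1)}\right| \left\||x'|^{a}\nabla_{H}f\right\|_{L^{p}(\mathbb{G})}.
$$

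Next I would write $|x'|^{c}f = \big(|x'|^{a-1}f\big)^{\delta}\big(|x'|^{b}f\big)^{1-\delta}$, which is legitimate precisely because of the balance condition $c=\delta(a-1)+b(1-\delta)$. Raising to the $r$-th power and integrating, the key step is H\"older's inequality with exponents $\frac{p}{\delta r}$ and $\frac{q}{(1-\delta)r}$, which are conjugate thanks to the scaling relation $\frac{\delta r}{p}+\frac{(1-\delta)r}{q}=1$. The constraint $\delta\in[0,1]\cap[\frac{r-q}{r},\frac{p}{r}]$ is exactly what guarantees $\frac{p}{\delta r}\geq 1$ and $\frac{q}{(1-\delta)r}\geq 1$ (interpreting the endpoints $\delta=0$ and $\delta=1$ directly, where the statement is immediate from the $L^q$ bound, respectively from the Hardy inequality), so H\"older applies. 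This yields
$$
\||x'|^{c}f\|_{L^{r}(\mathbb{G})}\leq \left\||x'|^{a-1}f\right\|^{\delta}_{L^{p}(\mathbb{G})}\left\||x'|^{b}f\right\|^{1-\delta}_{L^{q}(\mathbb{G})},
$$
and combining with the Hardy estimate above produces \eqref{CKN_thm1} with the stated constant.

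For the sharpness claim, I would test the inequality on (smooth compactly supported approximations of) the extremiser $h_{1}(x)=|x'|^{-(N+p(a-1))/p}$ coming from the equality case in the proof of Theorem~\ref{Sobolev}/Corollary~\ref{cor}: this function saturates the Hardy step, and one checks that under the conditions ``$p=q$ with $a-b=1$'' or ``$p\neq q$ with $p(1-a)+bq\neq 0$'' the H\"older step is also saturated in the limit, while for $\delta=0,1$ sharpness is inherited directly from the $L^q$-identity or Corollary~\ref{cor}. The main obstacle I anticipate is bookkeeping: verifying that the index inequalities defining the admissible range of $\delta$ really correspond to valid H\"older exponents at every endpoint, and making the approximation argument for the non-compactly-supported extremiser rigorous near $\{x'=0\}$ and at infinity, exactly as in the proof of Theorem~\ref{Sobolev}. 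No step is genuinely deep; the content is in choosing the right splitting and tracking constants.
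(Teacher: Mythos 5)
Your proof of the inequality itself is essentially the paper's argument: the splitting $|x'|^{c}f=(|x'|^{a-1}f)^{\delta}(|x'|^{b}f)^{1-\delta}$ justified by $c=\delta(a-1)+b(1-\delta)$, H\"older with exponents $\frac{p}{\delta r}$ and $\frac{q}{(1-\delta)r}$ (conjugate by $\frac{\delta r}{p}+\frac{(1-\delta)r}{q}=1$, admissible by the constraint on $\delta$), the weighted Hardy inequality \eqref{Hardy} with $\alpha=1-a$ applied to the first factor, and the endpoint cases $\delta=0,1$ treated directly. That part is fine.

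The gap is in your sharpness argument. You propose to test on $h_{1}(x)=|x'|^{-(N+p(a-1))/p}$, which saturates the Hardy step, and assert that "one checks" the H\"older step is also saturated in the limit under the stated hypotheses. For $p\neq q$ this is false in general: equality in the H\"older step requires $|f|^{p}|x'|^{-p(1-a)}$ to be proportional to $|f|^{q}|x'|^{bq}$, i.e. $|f|^{p-q}\propto|x'|^{p(1-a)+bq}$, which forces the exponent of the test power to be $\frac{p(1-a)+bq}{p-q}$; your $h_{1}$ has exponent $-\frac{N+p(a-1)}{p}$, and these coincide only under an extra relation between $N,p,q,a,b$ that is not assumed. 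The paper's argument goes the other way round: for $p\neq q$ it takes $h_{2}(x)=|x'|^{(p(1-a)+bq)/(p-q)}$, which saturates the H\"older step exactly, and then observes that \emph{any} power $|x'|^{C}$ with $C\neq 0$ saturates the H\"older equality condition inside the proof of the Hardy step --- this is precisely where the hypothesis $p(1-a)+bq\neq 0$ is used (it guarantees $C\neq0$, so the gradient does not vanish identically). For $p=q$ with $a-b=1$ the H\"older step is an identity for every function, so any $|x'|^{C}$, $C\neq0$, does the job. So the stated conditions in the theorem are tied to choosing the extremiser adapted to the H\"older step, not to your Hardy extremiser; as written, your sharpness claim for $p\neq q$ does not go through, while for $\delta=0,1$ and for $p=q$, $a-b=1$ your argument is acceptable.
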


\begin{rem} In the abelian case $\mathbb{G}=(\mathbb{R}^{n},+)$, we have $N=n$, $\nabla_{H}=\nabla=(\partial_{x_{1}}, \ldots, \partial_{x_{n}})$, so \eqref{CKN_thm1} implies the Caffarelli-Kohn-Nirenberg type inequality for $\mathbb{G}=\mathbb{R}^{n}$: Let $1<p,q<\infty$, $0<r<\infty$ with $p+q\geq r$ and $\delta\in[0,1]\cap\left[\frac{r-q}{r},\frac{p}{r}\right]$ and $a$, $b$, $c\in\mathbb{R}$. Assume that $\frac{\delta r}{p}+\frac{(1-\delta)r}{q}=1$ and $c=\delta(a-1)+b(1-\delta)$. Then we have
\begin{equation}\label{CKN_euclid}
\||x|^{c}f\|_{L^{r}(\Rn)}
\leq \left|\frac{p}{n+p(a-1)}\right|^{\delta} \left\||x|^{a}\nabla f\right\|^{\delta}_{L^{p}(\Rn)}
\left\||x|^{b}f\right\|^{1-\delta}_{L^{q}(\Rn)},
\end{equation}
for all $f\in C_{0}^{\infty}(\mathbb{R}^{n}\backslash\{0\})$, $|x|=\sqrt{x_{1}^{2}+\ldots+x_{n}^{2}}$ and $n\neq p(1-a)$. The constant in the inequality \eqref{CKN_euclid} is sharp for $p=q$ with $a-b=1$ or $p\neq q$ with $p(1-a)+bq\neq0$, or for $\delta=0,1$.
\end{rem}

We now indicate that the inequalities \eqref{CKN_euclid} give an extension of Theorem \ref{clas_CKN} with respect to the range of indices.

\begin{exa}\label{CKN_example} Let us take $1<p=q=r<\infty$, $a=-\frac{n-2p}{p}$, $b=-\frac{n}{p}$ and $c=-\frac{n-\delta p}{p}$.
Then by \eqref{CKN_euclid}, for all $f\in C_{0}^{\infty}(\mathbb{R}^{n}\backslash\{0\})$ we have  the inequality
\begin{equation}\label{CKN_example1}
\left\|\frac{f}{|x|^{\frac{n-\delta p}{p}}}\right\|_{L^{p}(\Rn)}\leq
\left\|\frac{\nabla f}{|x|^{\frac{n-2p}{p}}}\right\|^{\delta}_{L^{p}(\Rn)}
\left\|\frac{f}{|x|^{\frac{n}{p}}}\right\|^{1-\delta}_{L^{p}(\Rn)}, \quad 1<p<\infty, \;0\leq\delta\leq1,
\end{equation}
where $\nabla$ is the standard gradient in $\Rn$. Since we have
$$\frac{1}{q}+\frac{b}{n}=\frac{1}{p}+\frac{1}{n}\left(-\frac{n}{p}\right)=0,$$
we see that \eqref{clas_CKN0} fails, so that the inequality \eqref{CKN_example1} is not covered by Theorem \ref{clas_CKN}. Moreover, in this case, $p=q$ with $a-b=1$ hold true, so that the constant $\left|\frac{p}{n+p(a-1)}\right|^{\delta}=1$ in the inequality \eqref{CKN_example1} is sharp.
\end{exa}

\begin{proof}[Proof of Theorem \ref{CKN_thm}] {Case $\delta=0$}. In this case, we have $q=r$ and $b=c$ by $\frac{\delta r}{p}+\frac{(1-\delta)r}{q}=1$ and $c=\delta(a-1)+b(1-\delta)$, respectively. Then, the inequality \eqref{CKN_thm1} is reduces to the  trivial estimate
$$
\||x'|^{b}f\|_{L^{q}(\mathbb{G})}
\leq \left\||x'|^{b}f\right\|_{L^{q}(\mathbb{G})}.
$$
{Case $\delta=1$}. Notice that in this case, $p=r$ and $a-1=c$. By \eqref{Hardy}, we have for $N+c p\neq0$ the inequality
$$\||x'|^{c}f\|_{L^{r}(\mathbb{G})}\leq \left|\frac{p}{N+cp}\right|\||x'|^{c+1}\nabla_{H}f\|_{L^{r}(\mathbb{G})}.$$
In this case, the constant in \eqref{CKN_thm1} is sharp, since the constants in \eqref{Hardy} is sharp.

{Case $\delta\in(0,1)\cap\left[\frac{r-q}{r},\frac{p}{r}\right]$}.
Taking into account $c=\delta(a-1)+b(1-\delta)$,  a direct calculation gives $$\||x'|^{c}f\|_{L^{r}(\mathbb{G})}=
\left(\int_{\mathbb{G}}|x'|^{cr}|f(x)|^{r}dx\right)^{\frac{1}{r}}
=\left(\int_{\mathbb{G}}\frac{|f(x)|^{\delta r}}{|x'|^{\delta r (1-a)}} \frac{|f(x)|^{(1-\delta)r}}{|x'|^{-br(1-\delta)}}dx\right)^{\frac{1}{r}}.$$
Since we have $\delta\in(0,1)\cap\left[\frac{r-q}{r},\frac{p}{r}\right]$ and $p+q\geq r$, then by using H\"{o}lder's inequality for $\frac{\delta r}{p}+\frac{(1-\delta)r}{q}=1$, we obtain
$$\||x'|^{c}f\|_{L^{r}(\mathbb{G})}
\leq \left(\int_{\mathbb{G}}\frac{|f(x)|^{p}}{|x'|^{p(1-a)}}dx\right)^{\frac{\delta}{p}}
\left(\int_{\mathbb{G}}\frac{|f(x)|^{q}}{|x'|^{-bq}}dx\right)^{\frac{1-\delta}{q}}$$
\begin{equation}\label{CKN_thm1_1}=\left\|\frac{f}{|x'|^{1-a}}\right\|^{\delta}_{L^{p}(\mathbb{G})}
\left\|\frac{f}{|x'|^{-b}}\right\|^{1-\delta}_{L^{q}(\mathbb{G})}.
\end{equation}
Here we note that when $p=q$ and $a-b=1$, H\"{o}lder's equality condition is satisfied for any function. We also note that in the case $p\neq q$ the function
\begin{equation}\label{Holder_eq_2}
h_{2}(x)=|x'|^{\frac{1}{(p-q)}\left(p(1-a)+bq\right)}
\end{equation} satisfies H\"{o}lder's equality condition:
$$\frac{|h_{2}|^{p}}{|x'|^{p(1-a)}}=\frac{|h_{2}|^{q}}{|x'|^{-bq}}.$$
If $N\neq p(1-a)$, then by \eqref{Hardy}, we have
\begin{equation}\label{1}\left\|\frac{f}{|x'|^{1-a}}\right\|^{\delta}_{L^{p}(\mathbb{G})}\leq
\left|\frac{p}{N+p(a-1)}\right|^{\delta} \left\|\frac{\nabla_{H}f}{|x'|^{-a}}\right\|^{\delta}_{L^{p}(\mathbb{G})}.
\end{equation}
Putting this in \eqref{CKN_thm1_1}, one has
$$\||x'|^{c}f\|_{L^{r}(\mathbb{G})}\leq
\left|\frac{p}{N+p(a-1)}\right|^{\delta} \left\|\frac{\nabla_{H}f}{|x'|^{-a}}\right\|^{\delta}_{L^{p}(\mathbb{G})}
\left\|\frac{f}{|x'|^{-b}}\right\|^{1-\delta}_{L^{q}(\mathbb{G})}.$$
When we prove $\eqref{1}$, in exactly the same way as in the proof of Theorem \ref{Sobolev}, we note that
\begin{equation}\label{2}
h_{3}(x)=|x'|^{C},\;\;\;\;C\neq0,
\end{equation}
satisfies the H\"older equality condition. Therefore, in the case $p=q$, $a-b=1$ H\"older's equality condition of the inequalities \eqref{CKN_thm1_1} and \eqref{1} holds true for $h_{3}(x)$ in \eqref{2}. Moreover, in the case $p\neq q$, $p(1-a)+bq\neq0$ H\"older's equality condition of the inequalities \eqref{CKN_thm1_1} and \eqref{1} holds true for $h_{2}(x)$ in \eqref{Holder_eq_2}. Therefore, the constant in \eqref{CKN_thm1} is sharp when $p=q$, $a-b=1$ or $p\neq q$, $p(1-a)+bq\neq0$.
\end{proof}

\end{document}